\documentclass[11pt]{article}

\usepackage{amssymb, amsmath, amsthm, graphicx}
\usepackage[left=1in,top=1in,right=1in]{geometry}

\usepackage{subfigure}

\date{}

\theoremstyle{plain}
      \newtheorem{theorem}{Theorem}[section]
      \newtheorem{lemma}[theorem]{Lemma}

      \newtheorem{observation}[theorem]{Observation}

\theoremstyle{definition}

\theoremstyle{remark}

\def\conv{\mbox{\rm conv}}

\title{On the Erd\H os-Szekeres convex polygon problem}
\author{Andrew Suk\thanks{University of Illinois, Chicago, IL.  Supported by NSF grant DMS-1500153.
  Email: {\tt suk@uic.edu}.}  }

\begin{document}

\maketitle

\begin{abstract}
Let $ES(n)$ be the smallest integer such that any set of $ES(n)$ points in the plane in general position contains $n$ points in convex position.   In their seminal 1935 paper, Erd\H os and Szekeres showed that $ES(n) \leq {2n  - 4\choose n-2} + 1 = 4^{n -o(n)}$.  In 1960, they showed that $ES(n) \geq 2^{n-2} + 1$ and conjectured this to be optimal. In this paper, we nearly settle the Erd\H os-Szekeres conjecture by showing that $ES(n) =2^{n +o(n)}$.

\end{abstract}

\section{Introduction}

In their classic 1935 paper, Erd\H os and Szekeres \cite{es} proved that, for every integer $n\geq 3$, there is a minimal integer $ES(n)$, such that any set of $ES(n)$ points in the plane in general position\footnote{No three of the points are on a line.} contains $n$ points in convex position, that is, they are the vertices of a convex $n$-gon.

Erd\H os and Szekeres gave two proofs of the existence of $ES(n)$.  Their first proof used a quantitative version of Ramsey's Theorem, which gave a very poor upper bound for $ES(n)$.  The second proof was more geometric and showed that $ES(n) \leq {2n-4\choose n-2} + 1$ (see Theorem \ref{cupscaps} in the next section).  On the other hand, they showed that $ES(n) \geq 2^{n-2} + 1$ and conjectured this to be sharp~\cite{es2}.

Small improvements have been made on the upper bound ${2n - 4\choose n-2} + 1 \approx \frac{4^n}{\sqrt{n}}$ by various researchers \cite{CG,KP,tv1,tv2,MV,ny}, but no improvement in the order of magnitude has ever been made.  The most recent upper bound, due to Norin and Yuditsky \cite{ny} and Mojarrad and Vlachos \cite{MV}, says that

$$\lim\sup\limits_{n\rightarrow \infty} \frac{ES(n)}{{2n- 4\choose n-2}} \leq \frac{7}{16}.$$

\noindent  In the present paper, we prove the following.

 \begin{theorem}\label{main} For all $n \geq n_0$, where $n_0$ is a large absolute constant, $ES(n) \leq 2^{n + 6n^{2/3}\log n}.$

 \end{theorem}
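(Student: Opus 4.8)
The plan is to prove the contrapositive: every set $P$ of $N := 2^{n + 6 n^{2/3}\log n}$ points in the plane in general position contains $n$ points in convex position. I would begin from the classical decomposition of a convex polygon into its two monotone boundary chains: if we list the vertices of a convex polygon by $x$-coordinate, the vertices on the upper boundary form a cap and those on the lower boundary form a cup, and the two chains share the leftmost and the rightmost vertex. Conversely, an $a$-cap $C$ and a $b$-cup $D$ with the same left and right endpoints, with $C$ lying above $D$, fit together into a convex polygon with $a+b-2$ vertices. So it suffices to exhibit, for some pair of points $u$ (on the left) and $v$ (on the right), a cap from $u$ to $v$ and a cup from $u$ to $v$, one above the other, whose lengths sum to at least $n+2$.

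Producing the whole cap (or the whole cup) as a single monochromatic chain via Theorem~\ref{cupscaps} only recovers the bound $\binom{2n-4}{n-2}+1$. The improvement comes from assembling each of the two boundary chains out of $t := \Theta(n^{1/3})$ short pieces coming from a partition. Concretely, I would sort $P$ by $x$-coordinate, split it into $t$ consecutive blocks $S_1 < \cdots < S_t$ of roughly equal size, separated by vertical lines, and inside each block apply Theorem~\ref{cupscaps} with target length only about $n/t = \Theta(n^{2/3})$, so that already a block of size about $4^{n/t}$ forces a cap or cup of that length — and our blocks, being vastly larger, supply such pieces in abundance. The delicate step is the \emph{combining}: concatenating two convex chains across a block boundary is not free, since it imposes a slope-compatibility condition at the junction. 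Hence the within-block invocations of Theorem~\ref{cupscaps} must be run so as to also record the slope range (the ``direction'') of each extracted piece, and the across-block step then becomes a monotone-selection problem — pick one cap per block whose directions decrease in block order, and one cup per block whose directions increase, so that the caps concatenate into a single long cap, the cups into a single long cup, and the two resulting chains bound a convex region — which one again resolves by an Erd\H{o}s--Szekeres/Dilworth-type argument at this macroscopic level. By symmetry one may assume throughout that the configuration being sought is ``cap-heavy'', the cup-heavy case being identical. Tracking parameters: the $t$ blocks each contribute a term recorded additively in the exponent, while the macroscopic selection together with the slope bookkeeping contributes the lower-order $O(n^{2/3}\log n)$; balancing the main term $2^{n}$ against this overhead forces $t \approx n^{1/3}$ and yields the exponent $n + 6n^{2/3}\log n$.

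The main obstacle is precisely this gluing: guaranteeing that the pieces extracted from consecutive blocks actually fit together into one convex polygon, rather than into a disconnected family of convex chains. This is what prevents treating the blocks as independent — slope/direction information has to be carried through every within-block application of Theorem~\ref{cupscaps} and fed into the across-block selection — and identifying the right invariant so that the hypothesis ``no convex $n$-gon'' propagates cleanly down through the partition is the heart of the argument. Once that invariant is in place, the remaining estimates are routine binomial bookkeeping, and the stated bound follows from the choice of $t$ and of the per-block target length.
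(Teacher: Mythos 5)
Your high-level plan---cut the point set into $\Theta(n^{1/3})$ pieces, force a cap or cup of length only $\Theta(n^{2/3})$ inside each piece (which costs only $2^{O(n^{2/3})}$ points per piece by Theorem~\ref{cupscaps}), and then concatenate the pieces into a single convex chain of length $n$---is indeed the right shape of argument, and it is the same counting heuristic that makes the paper's bound come out to $2^{n+o(n)}$. But the proposal has a genuine gap exactly at the step you yourself flag as ``the heart of the argument'': the gluing. If you partition $P$ into vertical blocks $S_1<\cdots<S_t$ and apply Theorem~\ref{cupscaps} inside each block, you get \emph{some} cap or cup in each block with no control whatsoever over its slope range or over which of the two types you get, and no argument is given (nor does a routine Erd\H os--Szekeres/Dilworth step at the ``macroscopic level'' supply one) that lets you select slope-compatible pieces from $\Theta(n^{1/3})$ blocks so that they concatenate into one convex chain. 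Demanding instead that each block contain a long cap \emph{within a prescribed slope interval} is simply false at this point-count: a block may consist of a single long cup-like configuration and contain no long cap at all in the needed directions. So the invariant that makes ``no convex $n$-gon'' propagate through the partition is missing, and the proposal does not close the argument.

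The paper resolves precisely this difficulty by replacing arbitrary vertical blocks with the positive-fraction Erd\H os--Szekeres theorem of P\'or and Valtr (Theorem~\ref{partition}): one first finds a cap $X=\{x_1,\ldots,x_{k+3}\}$ whose support regions $T_1,\ldots,T_{k+2}$ each capture an $2^{-40k}$-fraction of $P$, and the key property---which your vertical strips lack---is that \emph{any} transversal choosing one point per region is automatically in convex position. The remaining work is then not slope bookkeeping but a Dilworth dichotomy inside each region with respect to the order $p\prec q$ iff $q\in\conv(B_i\cup p)$: either many pairwise non-adjacent regions contain large antichains, in which case $\lceil 2n^{2/3}\rceil$-caps extracted inside them (via Theorem~\ref{cupscaps}) union directly into an $n$-cap because lines through antichain points miss the other regions; or $\lceil n^{1/3}\rceil$ consecutive regions contain long chains, in which case the transitive-coloring form of cups--caps (Theorem~\ref{comcupcap}) plus Observation~\ref{glue} lets one run an iterative trade-off---if $Q_{i-1}$ has an $(iK-K)$-left-cap then $Q_i$ has either an $(n-iK+K)$-right-cap (done by gluing) or an $(iK)$-left-cap---terminating with an $n$-left-cap after $\lceil n^{1/3}\rceil$ steps. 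To repair your write-up you would essentially have to import both of these ingredients: the transversal-convexity invariant provided by Theorem~\ref{partition}, and the chain/antichain case analysis that substitutes for the unproved monotone-selection step.
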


The study of $ES(n)$ and its variants\footnote{Higher dimensions \cite{K,KV,suk}, for families of convex bodies in the plane \cite{fpss,dhh}, etc.} has generated a lot of research over the past several decades.  For a more thorough history on the subject, we refer the interested reader to \cite{MS,BPM,tv1}.  All logarithms are to base 2.

\section{Notation and tools}

In this section, we recall several results that will be used in the proof of Theorem \ref{main}.  We start with the following simple lemma.

\begin{lemma}[See Theorem 1.2.3 in \cite{matousek}]\label{four}
Let $X$ be a finite point set in the plane in general position such that every four members in $X$ are in convex position.  Then $X$ is in convex position.

\end{lemma}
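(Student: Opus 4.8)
The plan is to argue by contradiction, reducing the global statement to the local four-point hypothesis via a single point in the interior of the convex hull. Suppose $X$ is \emph{not} in convex position. Then some point $p \in X$ is not a vertex of $\conv(X)$, and hence $p \in \conv(X \setminus \{p\})$. Because $X$ is in general position, $p$ cannot lie on the boundary of $\conv(X\setminus\{p\})$ (otherwise $p$ would be collinear with two other points of $X$), so in fact $p$ lies in the interior of $\conv(X \setminus \{p\})$.

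Next I would exhibit three points of $X$ whose triangle strictly contains $p$. One clean way: fix any vertex $v$ of the convex polygon $\conv(X \setminus \{p\})$ and triangulate that polygon by the diagonals from $v$; every triangle in this triangulation has all three of its vertices in $X$. The point $p$ lies in the closure of one such triangle $T$ with vertex set $\{a,b,c\} \subseteq X$, and once more general position rules out $p$ lying on an edge of $T$, so $p$ is strictly inside $T$. Then the four points $a,b,c,p$ are not in convex position, contradicting the hypothesis that every four members of $X$ are in convex position; hence $X$ is in convex position. (Equivalently, one can invoke Carath\'eodory's theorem directly to write $p$ as a convex combination of at most three points of $X \setminus \{p\}$, obtaining the same triangle $T$.)

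The argument is essentially routine; the only steps needing care are the two appeals to general position — first to move from ``$p$ lies in $\conv(X\setminus\{p\})$'' to ``$p$ lies in the \emph{interior}'', and then to guarantee that the four-point configuration $\{a,b,c,p\}$ is genuinely non-convex rather than degenerate. These are exactly the places where the hypothesis ``no three points collinear'' is used, and they constitute the only mild obstacle in an otherwise immediate proof.
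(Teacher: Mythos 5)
Your argument is correct and is essentially the proof the paper intends: the paper gives no proof of its own, deferring to Theorem 1.2.3 of Matou\v{s}ek (Carath\'eodory's theorem), and your route — a non-vertex $p$ lies in $\conv(X\setminus\{p\})$, hence by Carath\'eodory/triangulation strictly inside a triangle $abc$ with $a,b,c\in X$, so $\{a,b,c,p\}$ violates the four-point hypothesis — is exactly the standard derivation from that theorem. Your two uses of general position (pushing $p$ off the boundary of $\conv(X\setminus\{p\})$ and off the edges of the triangle) are precisely where the hypothesis is needed, so the proof is complete.
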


The next theorem is a well-known result from \cite{es}, which is often referred to as the Erd\H os-Szekeres cups-caps theorem. Let $X$ be a $k$-element point set in the plane in general position. We say that $X$ forms a \emph{$k$-cup} (\emph{$k$-cap}) if $X$ is in convex position and its convex hull is bounded above (below) by a single edge.  In other words, $X$ is a cup (cap) if and only if for every point $p \in X$, there is a line $L$ passing through it such that all of the other points in $X$ lie on or above (below) $L$.  See Figure \ref{cupcappic}.

\begin{figure}
\begin{center}
\includegraphics[width=280pt]{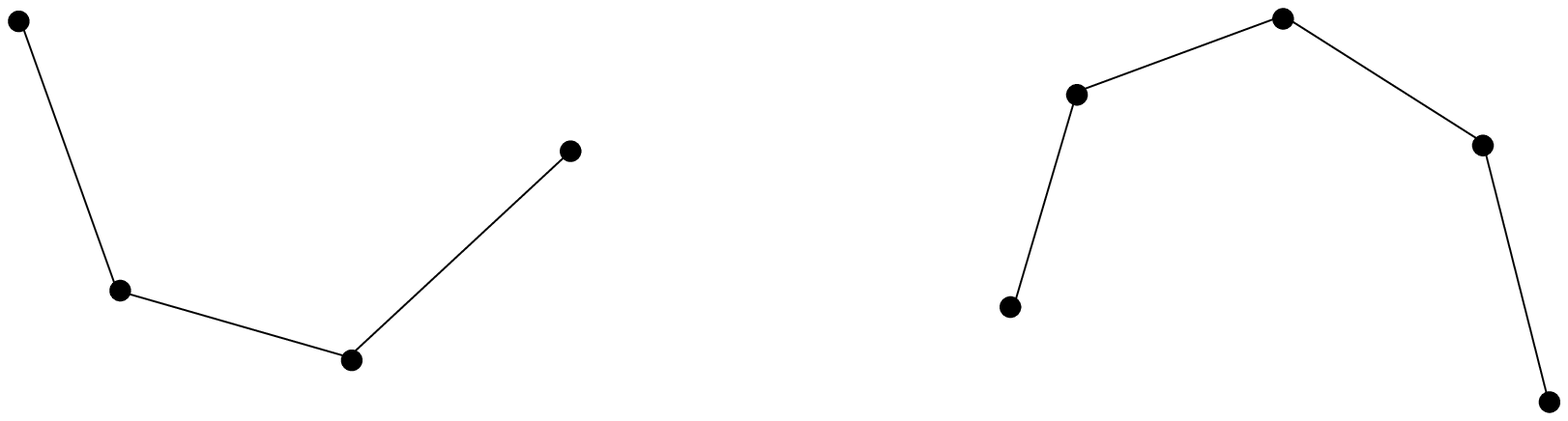}
  \caption{A 4-cup and a 5-cap.}\label{cupcappic}
 \end{center}
\end{figure}

\begin{theorem}[\cite{es}]\label{cupscaps}
Let $f(k,\ell)$ be the smallest integer $N$ such that any $N$-element planar point set in the plane in general position contains a $k$-cup or an $\ell$-cap.  Then

$$f(k,\ell) = {k + \ell - 4\choose k - 2} + 1.$$

\end{theorem}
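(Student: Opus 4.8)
The plan is to prove matching upper and lower bounds, both by induction on $k+\ell$, with Pascal's identity $\binom{k+\ell-5}{k-3}+\binom{k+\ell-5}{k-2}=\binom{k+\ell-4}{k-2}$ handling the arithmetic. The base cases are immediate: $f(2,\ell)=f(k,2)=2$, matching $\binom{\ell-2}{0}+1=\binom{k-2}{k-2}+1=2$, since a $2$-cup (or $2$-cap) is just any pair of points while a single point forms neither.

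The core of the upper bound is the recursive inequality $f(k,\ell)\le f(k-1,\ell)+f(k,\ell-1)-1$ for $k,\ell\ge 3$. To prove it, let $P$ be a set of $N:=f(k-1,\ell)+f(k,\ell-1)-1$ points in general position with distinct $x$-coordinates, suppose $P$ contains no $\ell$-cap, and try to produce a $k$-cup. Let $R\subseteq P$ be the set of points that appear as the rightmost vertex of at least one $(k-1)$-cup contained in $P$. Then $P\setminus R$ contains no $(k-1)$-cup -- any such cup would lie in $P$ and force its right endpoint into $R$ -- and no $\ell$-cap, so $|P\setminus R|\le f(k-1,\ell)-1$ and hence $|R|\ge f(k,\ell-1)$. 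As $R$ has no $\ell$-cap, it must contain a $k$-cup (and then we are done) or an $(\ell-1)$-cap $D=(d_1,\dots,d_{\ell-1})$ listed left to right. Since $d_1\in R$, fix a $(k-1)$-cup $C=(c_1,\dots,c_{k-2},d_1)$ in $P$ and look at the triple $(c_{k-2},d_1,d_2)$: if it is a cup then $C\cup\{d_2\}$ is a $k$-cup, and if it is a cap then $(c_{k-2},d_1,d_2,\dots,d_{\ell-1})$ is an $\ell$-cap, contradicting our assumption. Feeding this recurrence into the induction yields $f(k,\ell)\le\binom{k+\ell-4}{k-2}+1$.

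For the lower bound I will construct, for all $k,\ell\ge 2$, a general-position set $X_{k,\ell}$ of exactly $\binom{k+\ell-4}{k-2}$ points with no $k$-cup and no $\ell$-cap; when $k=2$ or $\ell=2$ take a single point. For $k,\ell\ge 3$, let $A$ and $B$ be the sets $X_{k-1,\ell}$ and $X_{k,\ell-1}$ supplied by induction. The mechanism is that axis scalings (by positive factors), vertical shears and translations are affine maps preserving general position and, crucially, preserving which subsets are cups and which are caps, since each keeps the left-to-right order of points and the order of the slopes of segments fixed. Apply such a map to place a copy $A'$ of $A$ inside $[0,1]\times[-\epsilon,\epsilon]$ and a copy $B'$ of $B$ inside $[X,X+1]\times[H,H+\epsilon]$, choosing $\epsilon$ small enough that every segment joining two points of $A'$, and every segment joining two points of $B'$, has slope of absolute value less than $\epsilon$, and then choosing $X>1$ and $H>\epsilon X+2\epsilon$ so that every segment from a point of $A'$ to a point of $B'$ has slope greater than $\epsilon$. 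After a tiny generic perturbation to guarantee general position, set $X_{k,\ell}:=A'\cup B'$; it has $\binom{k+\ell-5}{k-3}+\binom{k+\ell-5}{k-2}=\binom{k+\ell-4}{k-2}$ points.

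To see $X_{k,\ell}$ has no $k$-cup and no $\ell$-cap, note that since $A'$ lies entirely to the left of $B'$, any cup or cap consists of a block of its vertices in $A'$ followed by a block in $B'$. If a cup $S$ meets both blocks and has at least two vertices in $B'$, then writing $a$ for the last vertex of $S$ in $A'$ and $b,b'$ for the first two in $B'$, the points $a,b,b'$ are consecutive in $S$, so $\mathrm{slope}(a,b)<\mathrm{slope}(b,b')$ -- impossible, because $\mathrm{slope}(b,b')<\epsilon<\mathrm{slope}(a,b)$. Hence a cup meeting both blocks has at most one vertex in $B'$, so at most $(k-2)+1$ vertices since $A'$ (being an affine copy of $X_{k-1,\ell}$) has no $(k-1)$-cup; a cup inside $A'$ has at most $k-2$ vertices and one inside $B'$ at most $k-1$, so $X_{k,\ell}$ has no $k$-cup. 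Symmetrically, a cap meeting both blocks has at most one vertex in $A'$ -- at the junction the slope must strictly decrease, yet a within-$A'$ slope is $<\epsilon$ and an $A'$-to-$B'$ slope is $>\epsilon$ -- so it has at most $1+(\ell-2)$ vertices, and together with the trivial bounds for caps contained in $A'$ or in $B'$ there is no $\ell$-cap. With the upper bound this gives $f(k,\ell)=\binom{k+\ell-4}{k-2}+1$. I expect the main obstacle to be the recurrence of the second paragraph: choosing the auxiliary set $R$ correctly and, above all, squeezing the two desired outcomes out of the single triple $(c_{k-2},d_1,d_2)$; the lower bound is then the routine task of spreading the two inductive pieces out flatly and far enough apart that every crossing slope beats every internal one.
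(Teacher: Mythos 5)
The paper does not prove this theorem at all --- it is quoted from \cite{es} as a known tool --- so there is no internal proof to compare against; your argument is the classical Erd\H os--Szekeres proof and it checks out: the recurrence $f(k,\ell)\le f(k-1,\ell)+f(k,\ell-1)-1$ via the set of rightmost endpoints of $(k-1)$-cups, with the triple $(c_{k-2},d_1,d_2)$ either extending the cup or extending the cap, plus Pascal's identity for the induction; and the matching lower bound by gluing a flattened copy of $X_{k-1,\ell}$ to a far, high copy of $X_{k,\ell-1}$ so that cross slopes exceed internal slopes, which forces any cup to use at most one point of the right block and any cap at most one point of the left block. The one hypothesis worth stating explicitly is that all $x$-coordinates are distinct (obtained by a small rotation chosen so that no segment direction crosses the vertical), since the ``every triple in a chain is a cup or a cap'' dichotomy and the slope characterization of cups and caps silently rely on it; with that standard normalization made explicit, your proof is complete and is essentially the one in \cite{es} (see also \cite{matousek}).
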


The next theorem is a combinatorial reformulation of Theorem \ref{cupscaps} observed by Hubard et al.~\cite{hubard} (see also \cite{fpss,ms}).  A transitive 2-coloring of the triples of $\{1,2,\ldots,N\}$ is a 2-coloring, say with colors red
and blue, such that, for $i_1 < i_2 < i_3 < i_4$, if triples $(i_1, i_2, i_3)$ and $(i_2, i_3, i_4)$ are red (blue), then $(i_1, i_2, i_4)$ and $(i_1, i_3, i_4)$ are also red (blue).

\begin{theorem}[\cite{es}]\label{comcupcap}

Let $g(k,\ell)$ denote the minimum integer $N$ such that, for every transitive 2-coloring on the triples of $\{1,2,\ldots,N\}$, there exists a red clique of size $k$ or a blue clique of size $\ell$.  Then

$$g(k,\ell) = f(k,\ell)  = {k + \ell - 4\choose k - 2} + 1.$$
\end{theorem}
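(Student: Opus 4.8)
The plan is to establish the two inequalities $g(k,\ell)\ge f(k,\ell)$ and $g(k,\ell)\le\binom{k+\ell-4}{k-2}+1$ separately, and then identify the common value using Theorem~\ref{cupscaps}. (We may assume $k,\ell\ge 3$; for $k$ or $\ell$ equal to $2$ the statement is trivial, as every pair is vacuously a monochromatic $2$-clique of either color.) For the lower bound I would use the standard dictionary between point sets and transitive colorings: given a planar point set $X=\{p_1,\dots,p_N\}$ in general position, sorted by $x$-coordinate, color a triple $i_1<i_2<i_3$ \emph{red} if $p_{i_1}p_{i_2}p_{i_3}$ is a cup and \emph{blue} if it is a cap. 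Writing the cup condition for three points $p_a,p_b,p_c$ as ``$\mathrm{slope}(p_ap_b)<\mathrm{slope}(p_bp_c)$'', transitivity of this coloring is a routine slope computation. Moreover a set of points whose every three consecutive members (in $x$-order) form a cup has strictly increasing consecutive slopes, hence is an $x$-monotone convex chain, i.e.\ a cup; so a red $m$-clique is exactly an $m$-cup and, symmetrically, a blue $m$-clique is an $m$-cap. Feeding in the extremal configuration of Theorem~\ref{cupscaps} --- a set of $f(k,\ell)-1$ points with no $k$-cup and no $\ell$-cap --- produces a transitive $2$-coloring of $\{1,\dots,f(k,\ell)-1\}$ with no red $k$-clique and no blue $\ell$-clique, so $g(k,\ell)\ge f(k,\ell)$.

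For the upper bound I would rerun the Erd\H os--Szekeres cup--cap induction purely combinatorially; the point of the reformulation is precisely that that argument never uses anything beyond transitivity. Two easy preliminaries are needed. \textbf{(i)} The restriction of a transitive $2$-coloring of the triples of $\{1,\dots,N\}$ to any subset is again transitive. \textbf{(ii)} If $a_1<\dots<a_m$ and every consecutive triple $(a_i,a_{i+1},a_{i+2})$ is red, then $\{a_1,\dots,a_m\}$ is a red clique (and likewise with ``blue''); this follows by a straightforward induction (on the span $a_k-a_i$ of a triple $a_i<a_j<a_k$), using the transitivity rule to fill in every non-consecutive triple. The base cases are $g(3,\ell)=\ell$ and $g(k,3)=k$: a transitive coloring of $\{1,\dots,\ell\}$ with no red triple has all triples blue and is therefore a blue $\ell$-clique, and symmetrically; these agree with $\binom{k+\ell-4}{k-2}+1$ at $k=3$ and at $\ell=3$.

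The inductive step is the recurrence $g(k,\ell)\le g(k-1,\ell)+g(k,\ell-1)-1$ for $k,\ell\ge 4$. Take a transitive coloring $\chi$ of $\{1,\dots,N\}$ with $N=g(k-1,\ell)+g(k,\ell-1)-1$ and suppose it has no blue $\ell$-clique. Let $M$ be the set of indices that are the largest element of some red $(k-1)$-clique. By \textbf{(i)} and the definition of $g(k-1,\ell)$, the complement $\{1,\dots,N\}\setminus M$ cannot have size $\ge g(k-1,\ell)$ --- otherwise it would contain a red $(k-1)$-clique, whose maximum would have to lie in $M$ --- so $|M|\ge N-g(k-1,\ell)+1=g(k,\ell-1)$. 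Restricting $\chi$ to $M$ and invoking the definition of $g(k,\ell-1)$, we either obtain a red $k$-clique (and are done) or a blue $(\ell-1)$-clique $T=\{b_1<\dots<b_{\ell-1}\}\subseteq M$. Choose a red $(k-1)$-clique $S=\{a_1<\dots<a_{k-2}<b_1\}$ with maximum $b_1$ and examine the triple $(a_{k-2},b_1,b_2)$: if it is red, then every consecutive triple of $a_1<\dots<a_{k-2}<b_1<b_2$ is red, so by \textbf{(ii)} the set $S\cup\{b_2\}$ is a red $k$-clique; if it is blue, then every consecutive triple of $a_{k-2}<b_1<\dots<b_{\ell-1}$ is blue, so by \textbf{(ii)} the set $T\cup\{a_{k-2}\}$ is a blue $\ell$-clique, contradicting our assumption. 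Either way we are done, which proves the recurrence. Since $F(k,\ell):=\binom{k+\ell-4}{k-2}+1$ satisfies $F(k,\ell)=F(k-1,\ell)+F(k,\ell-1)-1$ by Pascal's rule together with the matching base values $F(3,\ell)=\ell$ and $F(k,3)=k$, an induction yields $g(k,\ell)\le F(k,\ell)$. Combining the two bounds with Theorem~\ref{cupscaps} gives $g(k,\ell)=f(k,\ell)=\binom{k+\ell-4}{k-2}+1$.

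The only genuinely delicate points are preliminary \textbf{(ii)} --- one must check carefully that transitivity propagates ``all consecutive triples red'' to ``all triples red'', not merely to neighbouring triples --- and the merging step at the end, where the cliques $S$ and $T$ must be arranged to overlap correctly (they share the index $b_1$, with $a_{k-2}<b_1<b_2$) so that appending a single point to one of them converts a consecutive-triple condition into a genuine clique. The restriction $k,\ell\ge 4$ in the recurrence is exactly what guarantees that the indices $a_{k-2}$ and its predecessors exist; the degenerate small cases are absorbed by the base cases above.
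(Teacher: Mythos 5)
Your proof is correct. Note that the paper itself gives no argument for Theorem \ref{comcupcap}: it is quoted as a known combinatorial reformulation of Theorem \ref{cupscaps} (attributed to Erd\H{o}s--Szekeres, with the reformulation observed by Hubard et al.), so there is no in-paper proof to compare against; what you write is essentially the standard argument from those sources. Your two halves both check out: the inequality $g(k,\ell)\geq f(k,\ell)$ by turning an extremal $(f(k,\ell)-1)$-point set with no $k$-cup and no $\ell$-cap into a transitive coloring via the cup/cap (slope) coloring, and $g(k,\ell)\leq \binom{k+\ell-4}{k-2}+1$ by rerunning the cup--cap induction using only transitivity. The steps you flag as delicate do go through: your preliminary (ii) follows from the stated transitivity by induction on the span --- for a triple $(a_i,a_j,a_k)$ with $j>i+1$ apply the rule to $a_i<a_{j-1}<a_j<a_k$, while for $j=i+1$ apply it to $a_i<a_{i+1}<a_{k-1}<a_k$ --- and the merge at $a_{k-2}<b_1<b_2$, with the recurrence matched by Pascal's rule and the base values $F(3,\ell)=\ell$, $F(k,3)=k$, is exactly the classical step. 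Two cosmetic remarks: in the lower bound you should rotate the extremal configuration so that no two points share an $x$-coordinate (the paper's notion of general position only excludes collinear triples), and the same dictionary yields $f(k,\ell)\leq g(k,\ell)$ directly (every point set's cup/cap coloring is transitive, and a red or blue clique is a cup or cap), so exhibiting the extremal witness, while perfectly valid, is not strictly necessary.
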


The next theorem is due to P\'or and Valtr \cite{PV}, and is often referred to as the positive-fraction Erd\H os-Szekeres theorem (see also \cite{BV,PS}).   Given a $k$-cap ($k$-cup) $X = \{x_1,\ldots, x_{k}\}$, where the points appear in order from left to right, we define the \emph{support of} $X$ to be the collection of open regions $\mathcal{C} = \{T_1,\ldots, T_{k}\}$, where $T_i$ is the region outside of $\conv(X)$ bounded by the segment $\overline{x_ix_{i + 1}}$ and by the lines $x_{i-1}x_{i}$, $x_{i+1}x_{i+2}$ (where $x_{k +1} = x_1$, $x_{k + 2} = x_2$, etc.).  See Figure \ref{tbase}.

\begin{figure}
\begin{center}
\includegraphics[width=240pt]{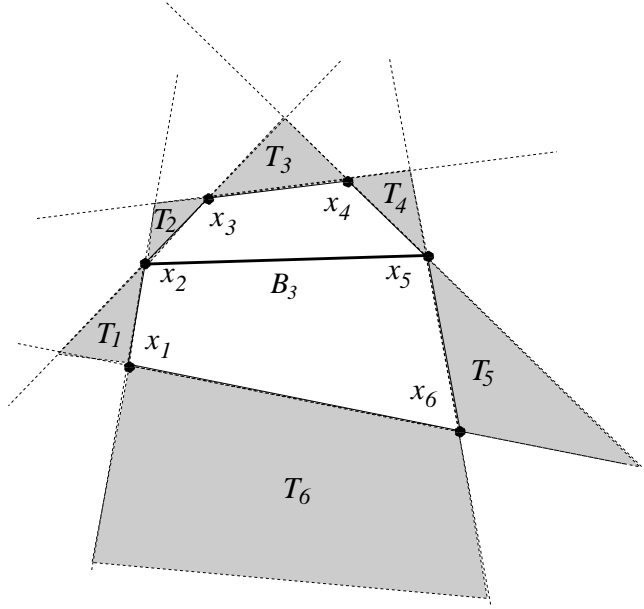}
  \caption{Regions $T_1,\ldots, T_6$ in the support of $X = \{x_1,\ldots, x_6\}$, and segment $B_3$.}\label{tbase}
 \end{center}
\end{figure}

\begin{theorem}[Proof of Theorem 4 in \cite{PV}]\label{partition}

Let $k\geq 3$ and let $P$ be a finite point set in the plane in general position such that $|P| \geq 2^{32k}$.  Then there is a $k$-element subset $X\subset P$ such that $X$ is either a $k$-cup or a $k$-cap, and the regions $T_1,\ldots, T_{k-1}$ from the support of $X$ satisfy $|T_i\cap P| \geq \frac{|P|}{2^{32k}}.$  In particular, every $(k-1)$-tuple obtained by selecting one point from each $T_i\cap P$, $i = 1,\ldots, k-1$, is in convex position.

\end{theorem}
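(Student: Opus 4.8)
The plan is to reduce the statement to a positive-fraction theorem with $2k-1$ parts. First I would produce $2k-1$ pairwise vertically-separated open regions $W_1,\dots,W_{2k-1}$, listed from left to right, with $|W_i\cap P|\ge |P|/2^{32k}$ for each $i$, such that either every transversal $w_1\in W_1,\dots,w_{2k-1}\in W_{2k-1}$ is a $(2k-1)$-cap, or every such transversal is a $(2k-1)$-cup. Granting this, say in the cap case, choose one point $x_i\in W_{2i-1}$ for $i=1,\dots,k$; since a subconfiguration of a cap is a cap, $X=\{x_1,\dots,x_k\}$ is a $k$-cap with $x_1<\dots<x_k$ in $x$-coordinate order. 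For $1\le i\le k-1$ and any $w\in W_{2i}$, every triangle with vertex $w$ and two vertices among $x_1,\dots,x_k$ is an increasing transversal of three of the $W_j$'s and hence a $3$-cap; a direct reading of the resulting slope inequalities places $w$ in the support region $T_i$ of $X$. Hence $|T_i\cap P|\ge|W_{2i}\cap P|\ge|P|/2^{32k}$. For the final assertion, if $y_i\in T_i\cap P$ for $i=1,\dots,k-1$ then each $y_i$ lies above $\overline{x_ix_{i+1}}$ and below the two neighbouring extended edges of the cap, and since the edges of a cap have strictly decreasing slopes one checks that any four of the $y_i$ are in convex position; so $\{y_1,\dots,y_{k-1}\}$ is in convex position by Lemma~\ref{four}. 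The cup case is symmetric.

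To produce $W_1,\dots,W_{2k-1}$, partition $P$ by vertical lines into $M:=\binom{4k-6}{2k-3}+1\le 2^{4k}$ consecutive blocks $B_1,\dots,B_M$, each of size at least $|P|/2^{4k}-1$, and then look for $2k-1$ of these blocks, together with a large subset of each, whose transversals all behave the same way --- all $(2k-1)$-caps or all $(2k-1)$-cups. The route is to first pass to subsets $B_i'\subseteq B_i$, each of size at least $|B_i|/L$ for a controlled factor $L$, so that for every triple $a<b<c$ the orientation of $(z_a,z_b,z_c)$ is the same for all $z_a\in B_a'$, $z_b\in B_b'$, $z_c\in B_c'$; then colour the triple $(a,b,c)$ of $\{1,\dots,M\}$ red or blue according to this common ``cap''/``cup'' orientation. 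Since the cup/cap colouring of any four $x$-sorted points in general position is transitive --- the transitivity underlying Theorem~\ref{comcupcap} --- the induced colouring of the triples of $\{1,\dots,M\}$ is a transitive $2$-colouring, and as $M\ge g(2k-1,2k-1)$, Theorem~\ref{comcupcap} supplies indices $j_1<\dots<j_{2k-1}$ forming a monochromatic clique; then $W_\ell:=B_{j_\ell}'$ are the regions sought.

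The hard part is this stabilization step, and above all keeping the loss factor $L$ --- hence the constant in the theorem --- under control. Applying a same-type-style lemma directly to all $M\approx 2^{4k}$ blocks would cost a factor doubly exponential in $k$, vastly exceeding the budget $2^{32k}$. The remedy --- the technical core of the argument, carried out in the proof of Theorem~4 of \cite{PV} --- is to avoid any global stabilization: one uses the cups--caps theorem at the block level to reduce to $O(k)$ relevant blocks, stabilizes only those at a cost of $2^{O(1)}$ per block, and iterates, so that the accumulated loss is only $2^{O(k)}$; checking that this loss, together with the $2^{4k}$ initial blocks, still stays below $2^{32k}$ is the delicate bookkeeping. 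The reduction in the first paragraph, the slope computations identifying the support regions, and tracking which block lands in which $T_i$ are routine by comparison.
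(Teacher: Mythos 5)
Your first paragraph is fine: if $2k-1$ vertically separated sets $W_1,\ldots,W_{2k-1}$, each containing at least $|P|/2^{32k}$ points of $P$, are such that every transversal is a $(2k-1)$-cap, then picking $x_i\in W_{2i-1}$ gives a $k$-cap $X$, and the cap conditions on the triples $(x_{i-1},x_i,w)$, $(x_i,w,x_{i+1})$ and $(w,x_{i+1},x_{i+2})$ (two of them when $i=1$ or $i=k-1$) do place every $w\in W_{2i}$ inside $T_i$. But everything after that is where the theorem actually lives, and it is not established. The entire content of Theorem \ref{partition} is the quantitative claim that such blocks can be found while losing only the factor $2^{32k}$; you reduce this to the existence of the stabilized blocks $B'_{j_1},\ldots,B'_{j_{2k-1}}$ (which would need $2^{4k}\cdot L\le 2^{32k}$), correctly observe that stabilizing all $M\approx 2^{4k}$ vertical blocks in the same-type manner costs a factor doubly exponential in $k$, and then hand the repair over to ``the proof of Theorem 4 of \cite{PV}.'' Moreover, the repair you sketch does not cohere with your own framework: to define the transitive colouring of the triples of $\{1,\ldots,M\}$ and invoke Theorem \ref{comcupcap} you need \emph{every} triple of blocks to have constant orientation, i.e.\ all $M$ blocks stabilized against one another, so ``use cups--caps at the block level to reduce to $O(k)$ relevant blocks, stabilize only those, and iterate'' is a placeholder rather than a mechanism --- no iteration scheme is specified, and specifying one that stays within the stated budget is precisely the missing proof.

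For comparison, the paper does not prove this statement either: it is imported from \cite{PV}, as the bracketed attribution ``Proof of Theorem 4 in \cite{PV}'' indicates, and the paper's only added content is the remark following the statement --- the proof in \cite{PV} produces a $2k$-element cup or cap $X$ for which $k$ of the support regions, not necessarily consecutive ones, each contain at least $|P|/2^{32k}$ points of $P$, and one then passes to a suitable $k$-element subset $X'\subset X$ so that these rich regions lie inside the consecutive regions $T_1,\ldots,T_{k-1}$ of the support of $X'$. So the intermediate statement actually supplied by \cite{PV} is different from the one you reduce to, and your description of how \cite{PV} controls the loss is a guess rather than a verified step. If citing \cite{PV} at that granularity is permitted, the paper's route (take the $2k$-point configuration and thin it out to make the rich regions consecutive) is the shorter and safer one; if a self-contained proof was intended, your proposal is missing its core step.
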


Note that Theorem \ref{partition} does not say anything about the points inside region $T_{k}$.  Let us also remark that in the proof of Theorem \ref{partition} in \cite{PV}, the authors find a $2k$-element set $X\subset P$, such that $k$ of the regions in the support of $X$ each contain at least $\frac{|P|}{2^{32k}}$ points from $P$, and therefore these regions may not be consecutive.  However, by appropriately selecting a $k$-element subset $X'\subset X$, we obtain Theorem \ref{partition}.

\section{Proof of Theorem \ref{main}}
Let $P$ be an $N$-element planar point set in the plane in general position, where $N =  \lfloor 2^{n+ 6 n^{2/3}\log n}\rfloor $ and $n \geq n_0$, where $n_0$ is a sufficiently large absolute constant.  Set $k = \lceil n^{2/3}\rceil$.   We apply Theorem~\ref{partition} to $P$ with parameter $k  + 3 $, and obtain a subset $X = \{x_1,\ldots, x_{k+3}\}\subset P$ such that $X$ is a cup or a cap, and the points in $X$ appear in order from left to right.  Moreover since $k = \lceil n^{2/3}\rceil$ is large, regions $T_1,\ldots, T_{k+2}$ in the support of $X$ satisfy

$$|T_i\cap P| \geq \frac{N}{2^{40k}}.$$   Set $P_i = T_i\cap P$ for $i = 1,\ldots, k+2$.  We will assume that $X$ is a cap, since a symmetric argument would apply.  We say that the two regions $T_i$ and $T_j$ are \emph{adjacent} if $i$ and $j$ are consecutive indices.

Consider the subset $P_i\subset P$ and the region $T_i$, for some fixed $i \in \{2,\ldots, k+1\}$.  Let $B_i$ be the segment $\overline{x_{i-1}x_{i+2}}$.   See Figure~\ref{tbase}. The point set $P_i$ naturally comes with a partial order $\prec$, where $p\prec q$ if $p\neq q$ and $q \in \conv(B_i\cup p)$.  Set $\alpha = 3n^{-1/3}\log n$.  By Dilworth's Theorem \cite{dil}, $P_i$ contains either a chain of size at least $|P_i|^{1-\alpha}$ or an antichain of size at least $|P_i|^{\alpha}$ with respect to $\prec$.  The proof now falls into two cases.

\medskip

\noindent \emph{Case 1}.  Suppose there are $t = \lceil\frac{n^{1/3}}{2}\rceil$ parts $P_i$ in the collection $\mathcal{F}= \{P_2,P_3,\ldots, P_{k+1}\}$, such that no two of them are in adjacent regions, and each such part contains a subset $Q_i$ of size at least $|P_i|^{\alpha}$ such that $Q_i$ is an antichain with respect to $\prec$.  Let $Q_{j_1},Q_{j_2},\ldots, Q_{j_t}$ be the selected subsets.

For each $Q_{j_r}$, $r\in \{1,\ldots, t\}$, the line spanned by any two points in $Q_{j_r}$ does not intersect the segment $B_{j_r}$, and therefore, does not intersect region $T_{j_{w}}$ for $w\neq r$ (by the non-adjacency property).  Since $n$ is sufficiently large, we have $40k < n^{2/3}\log n$, and therefore

$$|Q_{j_r}|  \geq |P_i|^{\alpha} \geq \left(\frac{N}{2^{40k}}\right)^{\alpha} \geq 2^{3n^{2/3}\log n + 15n^{1/3}\log^2n} \geq {n + \lceil 2n^{2/3}\rceil - 4\choose n - 2} +1= f(n,\lceil 2n^{2/3}\rceil).$$

\noindent Theorem \ref{cupscaps} implies that $Q_{j_r}$ contains either an $n$-cup or a $\lceil 2n^{2/3}\rceil$-cap.  If we are in the former case for any $r \in \{1,\ldots , t\}$, then we are done.  Therefore we can assume $Q_{j_r}$ contains a subset $S_{j_r}$ that is a $\lceil 2n^{2/3}\rceil$-cap, for all $r \in \{1,\ldots, t\}$.

We claim that $S = S_{j_1}\cup \cdots \cup S_{j_{t}}$ is a cap, and therefore $S$ is in convex position.  Let $p \in S_{j_r}$.  Since $|S_{j_r}| \geq 2$, there is a point $q \in S_{j_r}$ such that the line $L$ supported by the segment $\overline{pq}$ has the property that all of the other points in $S_{j_r}$ lie below $L$.  Since $L$ does not intersect $B_{j_r}$, all of the points in $S\setminus \{p,q\}$ must lie below $L$.  Hence, $S$ is a cap and

$$|S| = |S_{j_1}\cup \cdots \cup S_{j_{t}}| \geq \frac{n^{1/3}}{2}(2n^{2/3}) = n.$$

\medskip

\noindent \emph{Case 2}.  Suppose we are not in Case 1.  Then there are $\lceil n^{1/3}\rceil$ consecutive indices $j, j+1, j+2, \ldots,$ such that each such part $P_{j+r}$ contains a subset $Q_{j + r}$ such that $Q_{j + r}$ is a chain of length at least $|P_{j + r}|^{1-\alpha}$ with respect to $\prec$.  For simplicity, we can relabel these sets $Q_1,Q_2,Q_3,\ldots$.

Consider the subset $Q_i$ inside the region $T_i$, and order the elements in $Q_i = \{p_1,p_2,p_3,\ldots\}$ with respect to $\prec$.  We say that $Y \subset Q_i$ is a \emph{right-cap} if $x_{i}\cup Y$ is in convex position, and we say that $Y$ is a \emph{left-cap} if $x_{i +1}\cup Y$ is in convex position.  Notice that left-caps and right-caps correspond to the standard notion of cups and caps after applying an appropriate rotation to the plane so that the segment $\overline{x_ix_{i + 1}}$ is vertical.  Since $Q_i$ is a chain with respect to $\prec$, every triple in $Q_i$ is either a left-cap or a right-cap, but not both.  Moreover, for $i_1 < i_2 < i_3 < i_4$, if $(p_{i_1},p_{i_2},p_{i_3})$ and $(p_{i_2},p_{i_3},p_{i_4})$ are right-caps (left-caps), then $(p_{i_1},p_{i_2},p_{i_4})$ and $(p_{i_1},p_{i_3},p_{i_4})$ are both right-caps (left-caps).  By Theorem~\ref{comcupcap}, if $|Q_i| \geq f(k,\ell)$, then $Q_i$ contains either a $k$-left-cap or an $\ell$-right-cap.  We make the following observation.

\begin{observation}\label{glue}

Consider the (adjacent) sets $Q_{i-1}$ and $Q_{i}$.  If $Q_{i-1}$ contains a $k$-left-cap $Y_{i-1}$, and $Q_{i}$ contains an $\ell$-right-cap $Y_{i}$, then $Y_{i-1}\cup Y_{i}$ forms $k+\ell$ points in convex position.

\end{observation}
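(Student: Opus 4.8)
The plan is to show that $Y_{i-1}\cup Y_i$ is in convex position by verifying the hypothesis of Lemma \ref{four}: every four points of $Y_{i-1}\cup Y_i$ are in convex position. The four points split according to how many come from each side. If all four lie in $Y_{i-1}$ (resp.\ $Y_i$), they are in convex position because $Y_{i-1}$ is a left-cap and hence already in convex position (resp.\ $Y_i$ is a right-cap). So the only case to handle is a ``mixed'' quadruple with some points from each side; and by symmetry of the argument it suffices to treat quadruples with exactly one point on one side and three on the other, plus the $2+2$ split.

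The key geometric input is the segment $B_{i-1}=\overline{x_{i-2}x_{i+1}}$ and the base segment $\overline{x_{i-1}x_i}$ separating the regions $T_{i-1}$ and $T_i$. First I would record the separation fact: the line through $x_{i-1}$ and $x_i$ strictly separates $T_{i-1}$ from $T_i$ (all of $Q_{i-1}$ lies on one side, all of $Q_i$ on the other), since $X$ is a cap and $T_{i-1},T_i$ are the two regions hanging off the consecutive hull edges meeting at… — more carefully, $T_{i-1}$ is bounded by $\overline{x_{i-1}x_i}$ from one side and $T_i$ from the other, so the supporting line of $\overline{x_{i-1}x_i}$ has $Q_{i-1}$ entirely above it and $Q_i$ entirely below, or vice versa. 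Next, because $Y_{i-1}$ is a left-cap (i.e.\ $\{x_i\}\cup Y_{i-1}$ is in convex position with the standard cap/cup picture after rotating $\overline{x_{i-1}x_i}$ to vertical), every point of $Y_{i-1}$ ``sees'' $x_i$ across the base; symmetrically every point of $Y_i$ forms a right-cap with $x_{i-1}$. The upshot I want is: for any $p\in Y_{i-1}$ and any two points $q_1,q_2\in Y_i$, the point $p$ lies outside the triangle $\conv(\{x_i,q_1,q_2\})$ on the correct side, and dually with the roles reversed — which, combined with the fact that within each cap the points are already convex, forces every mixed quadruple into convex position.

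Concretely, for a $3+1$ quadruple $\{p,r_1,r_2,r_3\}$ with $p\in Y_i$ and $r_1,r_2,r_3\in Y_{i-1}$ (in $\prec$-order), I would argue that $r_1,r_2,r_3$ is a left-cap so the line $r_1r_3$ has $r_2$ above it (in the rotated picture), and $p$ lies on the far side of the separating line $\overline{x_{i-1}x_i}$, hence below $r_1r_3$; since the order of the $r$'s along the cap together with $p$ being strictly below the chord $r_1 r_3$ places $p$ outside $\conv(\{r_1,r_2,r_3\})$, the four points are in convex position. The $2+2$ case $\{r_1,r_2,p_1,p_2\}$ is handled the same way using that $\overline{r_1r_2}$ and $\overline{p_1p_2}$ lie on opposite sides of $\overline{x_{i-1}x_i}$ and do not cross, so $r_1,p_1,r_2,p_2$ (in a suitable cyclic order around the base) bound a convex quadrilateral. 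Having checked all four-point subsets, Lemma \ref{four} gives that $Y_{i-1}\cup Y_i$ is in convex position, and it has $k+\ell$ points.

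The main obstacle is getting the orientation/side bookkeeping exactly right: one must be careful about which side of the separating line each region lies on (this depends on the $\prec$ order and on $X$ being a cap rather than a cup), and about the fact that ``left-cap'' and ``right-cap'' are defined relative to $x_i$ and $x_{i+1}$ (resp.\ $x_{i-1}$), so the relevant apex changes when passing from $Q_{i-1}$ to $Q_i$. Once the separation lemma is stated cleanly, the four-point check is routine casework, but the casework is where all the content sits.
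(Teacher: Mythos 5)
Your overall skeleton (reduce to four-point checks via Lemma \ref{four}, split into $4+0$, $3+1$, $2+2$ cases) is the same as the paper's, but the geometric input you rely on is too weak, and both mixed cases have genuine gaps. Your separation fact is true: the line through $x_{i-1},x_i$ does separate $T_{i-1}$ from $T_i$. What the proof actually needs, however, and what you never invoke, is that $Q_{i-1}$ and $Q_i$ are \emph{chains} with respect to $\prec$: if $p\prec q$ then $q\in\conv(B_{i-1}\cup p)$, so the line spanned by any two points of $Q_{i-1}$ crosses the segment $B_{i-1}$ and consequently misses the adjacent region $T_i$ (and symmetrically for $Q_i$ and $T_{i-1}$). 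In the $2+2$ case, knowing only that $r_1,r_2$ lie above the separating line, $p_1,p_2$ below it, and that the segments $\overline{r_1r_2}$ and $\overline{p_1p_2}$ do not cross does \emph{not} give convex position: take $r_1=(0,10)$, $r_2=(0,5)$, $p_1=(-1,-1)$, $p_2=(1,-1)$ with the $x$-axis as the separating line; then $r_2\in\conv(\{r_1,p_1,p_2\})$. The correct criterion is that $p_1,p_2$ lie on one side of the \emph{line} $r_1r_2$ and $r_1,r_2$ on one side of the line $p_1p_2$, and that is exactly what the chain property delivers (the line $r_1r_2$ crosses $B_{i-1}$, hence avoids $T_i\supset\{p_1,p_2\}$).

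In the $3+1$ case there are two problems. First, ``$p$ lies below the separating line, hence below the chord $r_1r_3$'' is a non sequitur: the line $r_1r_3$ crosses the separating line, so part of it lies on $p$'s side, and without knowing that this line misses $T_i$ (again the chain property) you cannot place $p$ relative to it. Second, even granting that $r_2$ and $p$ lie on opposite sides of the line $r_1r_3$, i.e.\ that $p\notin\conv(\{r_1,r_2,r_3\})$, this does not imply the quadruple is in convex position: with $r_1=(0,0)$, $r_2=(0.5,1)$, $r_3=(1,0)$, $p=(5,-0.1)$, the point $r_3$ lies inside $\conv(\{r_1,r_2,p\})$. One must also rule out $r_1$ or $r_3$ lying in the hull of the other three points, and your bookkeeping never does this. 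The paper's argument for this case is genuinely different and uses the apex: since all three lines spanned by pairs of $\{r_1,r_2,r_3\}$ cross $B_{i-1}$, the points $x_i$ and $p$ lie in the same cell of the arrangement of these three lines, and since $\{r_1,r_2,r_3,x_i\}$ is in convex position by the definition of a left-cap, so is $\{r_1,r_2,r_3,p\}$. You gesture at this idea (``every point of $Y_{i-1}$ sees $x_i$''), but it never enters your actual casework; repairing your proof essentially means reinstating the chain/$B_{i-1}$ lemma and the left-cap-with-apex argument, which is the paper's proof.
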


\begin{proof}
By Lemma \ref{four}, it suffices to show every four points in $Y_{i-1}\cup Y_{i}$ are in convex position.  If all four points lie in $Y_i$, then they are in convex position.  Likewise if they all lie in $Y_{i - 1}$, they are in convex position.  Suppose we take two points $p_1,p_2 \in Y_{i-1}$ and two points $p_3,p_4 \in Y_{i}$.  Since $Q_{i-1}$ and $Q_i$ are both chains with respect to $\prec$, the line spanned by $p_1,p_2$ does not intersect the region $T_{i}$, and the line spanned by $p_3,p_4$ does not intersect the region $T_{i-1}$.  Hence $p_1,p_2,p_3,p_4$ are in convex position.  Now suppose we have $p_1,p_2,p_3 \in Y_{i-1}$ and $p_4 \in Y_{i}$.  Since the three lines $L_1,L_2,L_3$ spanned by $p_1,p_2,p_3$ all intersect the segment $B_{i-1}$, both $x_{i}$ and $p_4$ lie in the same region in the arrangement of $L_1\cup L_2\cup L_3$.  Therefore $p_1,p_2,p_3,p_4$ are in convex position. The same argument follows in the case that $p_1 \in Y_{i-1}$ and $p_2,p_3,p_4 \in Y_i$.  See Figure \ref{gluepic}.  \end{proof}

\begin{figure}\label{gluepic}
\begin{center}
\includegraphics[width=200pt]{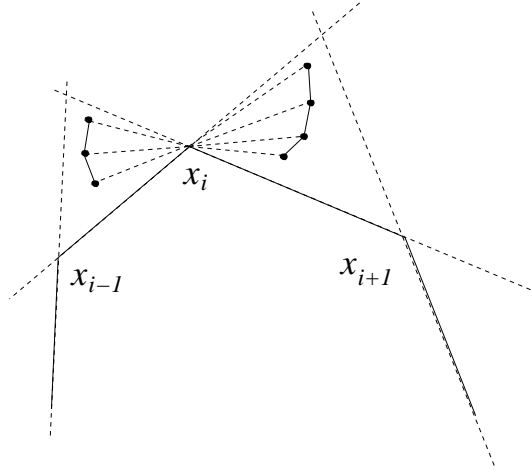}
  \caption{A 3-left-cap in $Q_{i-1}$ and a 4-right-cap in $Q_{i}$, which forms 7 points in convex position.}\label{gluepic}
 \end{center}
\end{figure}

\noindent We have for $i \in \{1,\ldots, \lceil n^{1/3}\rceil\}$,

\begin{equation}\label{p1}|Q_i| \geq |P_i|^{(1 - \alpha)} \geq \left(\frac{N}{2^{40k}}\right)^{1 - \alpha} \geq  2^{n + 2n^{2/3}\log n - 15n^{1/3}\log^2n}.\end{equation}

Set $K = \lceil n^{2/3}\rceil$.  Since $n$ is sufficiently large, we have $$|Q_1| \geq {n + K -4\choose K - 2} +1= f(K,n),$$ which implies that $Q_1$ either contains an $n$-right-cap, or a $K$-left-cap.  In the former case we are done, so we can assume that $Q_1$ contains a $K$-left-cap.  Likewise, $|Q_2| \geq {n + K - 4\choose 2K - 2} +1= f(2K,n-K)$, which implies $Q_2$ contains either an $(n-K)$-right-cap, or a $(2K)$-left-cap.  In the former case we are done since Observation \ref{glue} implies that the $K$-left-cap in $Q_1$ and the $(n-K)$-right-cap in $Q_2$ form $n$ points in convex position.  Therefore we can assume $Q_2$ contains a $(2K)$-left-cap.

In general, if we know that $Q_{i-1}$ contains an $(iK-K)$-left-cap, then we can conclude that $Q_i$ contains an $(iK)$-left-cap.  Indeed, for all $i \leq  \lceil n^{1/3}\rceil$ we have

\begin{equation}\label{p2} {n + K - 4\choose iK - 2} \leq   2^{n + \lceil n^{2/3}\rceil -4}.   \end{equation}

\noindent Since $n$ is sufficiently large, (\ref{p1}) and (\ref{p2}) imply that

$$|Q_i|  \geq  2^{n + 2n^{2/3}\log n - 15n^{1/3}\log^2n} \geq {n + K - 4 \choose iK -2} +1= f(iK, n-iK + K ).$$

\noindent Therefore, $Q_i$ contains either an $(n-iK + K)$-right-cap or an $(iK)$-left-cap.  In the former case we are done by Observation \ref{glue} (recall that we assumed $Q_{i-1}$ contains an $(iK-K)$-left-cap), and therefore we can assume $Q_i$ contains an $(iK)$-left-cap.  Hence for $i = \lceil n^{1/3} \rceil$, we can conclude that $Q_{\lceil n^{1/3} \rceil}$ contains an $n$-left-cap.  This completes the proof of Theorem~\ref{main}.\qed

\section{Concluding remarks}

Following the initial publication of this work on arXiv, we have learned that G\'abor Tardos has improved the lower order term in the exponent, showing that $ES(n) = 2^{n  + O(\sqrt{n\log n})}$.

\medskip

\medskip

\medskip

\noindent \textbf{Acknowledgments.} I would like to thank J\'anos Pach,  G\'abor Tardos,  and G\'eza T\'oth for numerous helpful comments.

\end{document}